\def\real    { \mathbb{R} }
\def\reals   { \mathbb{R} }
\newtheorem{lemma}{Lemma}
\newtheorem{definition}{Definition}
\newtheorem{theorem}{Theorem}
\newcommand{\bitem}{\begin{itemize}}
\newcommand{\eitem}{\end{itemize}}
\newcommand{\supp}{\mathrm{supp}}
\newcommand{\beqn}{\begin{equation}}
\newcommand{\eeqn}{\end{equation}}
\newcommand{\balign}{\begin{align}}
\newcommand{\ealign}{\end{align}}
\def\x{{\mathbf x}}
\def\y{{\mathbf y}}
\def\u{{\mathbf u}}
\def\v{{\mathbf v}}
\def\z{{\mathbf z}}
\def\w{{\mathbf w}}
\def\z{{\mathbf z}}
\def\A{{\mathbf A}}
\def\bI{{\mathbf I}}
\def\I{{\mathbf I}}
\def\bPhi{\boldsymbol{\Phi}}
\def\Proj{{\mathbf P}}
\newcommand{\vs}{\ensuremath{\mathbf s}}
\newcommand{\vx}{\ensuremath{\mathbf x}}
\def \Ran {{\mathcal{R}}}
\title{\textbf{A Simple Proof that Random Matrices are Democratic}}
\author{\em Mark A.\ Davenport, Jason N.\ Laska, Petros T.\ Boufounos,\\
\em and Richard G.\ Baraniuk%
\thanks{MAD, JNL, and RGB are with the Department of Electrical and
Computer Engineering, Rice University, Houston, TX, USA. Email: \{md,
laska, richb\}@rice.edu. \protect\\
PTB is with Mitsubishi Electric Research Laboratories (MERL), Boston, MA, USA. Email: petrosb@merl.com. \protect\\
\indent This work was supported by the grants NSF CCF-0431150, CCF-0728867,
CNS-0435425, and CNS-0520280, DARPA/ONR N66001-08-1-2065, ONR
N00014-07-1-0936, N00014-08-1-1067, N00014-08-1-1112, and
N00014-08-1-1066, AFOSR FA9550-07-1-0301, ARO MURI W311NF-07-1-0185,
and the Texas Instruments Leadership University Program.
    \protect\\ \indent Web: dsp.rice.edu/cs} \protect\\\protect\\
    Rice University \protect\\ Department of Electrical and Computer Engineering\protect\\
    Technical Report TREE0906}
\date{November 4, 2009}
\begin{document}

\maketitle

\vspace{-0.3in}

\begin{abstract}
The recently introduced theory of \emph{compressive sensing} (CS) enables the reconstruction of sparse or compressible signals from a small set of nonadaptive, linear measurements.   If properly chosen, the number of measurements can be significantly smaller than the ambient dimension of the signal and yet preserve the significant signal information.  Interestingly, it can be shown that random measurement schemes provide a near-optimal encoding in terms of the required number of measurements.  In this report, we explore another relatively unexplored, though often alluded to, advantage of using random matrices to acquire CS measurements.  Specifically, we show that random matrices are {\em democractic}, meaning that each measurement carries roughly the same amount of signal information.  We demonstrate that by slightly increasing the number of measurements, the system is robust to the loss of a small number of arbitrary measurements. In addition, we draw connections to oversampling and demonstrate stability from the loss of significantly more measurements.
\end{abstract}

\section{Introduction}
The recently developed {\em compressive sensing} (CS) framework allows us to acquire a signal $\x \in \mathbb{R}^N$ from a small set of $M$ non-adaptive, linear measurements~\cite{Can::2006::Compressive-sampling,Don::2006::Compressed-sensing}.  This process can be
represented as
\begin{equation} \label{eq:yphix}
\y=\bPhi \x
\end{equation}
where $\bPhi$ is an $M \times N$ matrix that models the measurement system. The hope is that we can design $\bPhi$ so that $\x$ can be accurately recovered even when $M \ll N$.   In general this is not possible, but if $\x$ is $K$-sparse, meaning that it has only $K$ nonzero entries then it is possible to design $\bPhi$ that preserve the information about $\x$ using only $M = O (K \log (N/K))$ measurements.  The most commonly studied $\bPhi$ that satisfy this bound on $M$ are \emph{random}, i.e., each entry of $\bPhi$ is drawn independently from some suitable distribution~\cite{BarDavDeV::2008::A-Simple-Proof}.  We will focus our attention on such $\bPhi$.

Among the advantages of random measurements is a property commonly referred to as \emph{democracy}.  While it is not usually rigorously defined in the literature, democracy is usually taken to mean that each measurement contributes a similar amount of information about the signal $\x$ to the compressed representation $\y$~\cite{CalDau::2002::The-pros-and-cons,Gun::2000::Harmonic-analysis,Can::2005::Integration-of-sensing}.\footnote{The
original introduction of this term was with respect to quantization~\cite{CalDau::2002::The-pros-and-cons,Gun::2000::Harmonic-analysis},
i.e., a democratic quantizer would ensure that each bit is given
``equal weight.''  As the CS framework developed, it became
empirically clear that CS systems exhibited this property with
respect to compression~\cite{Can::2005::Integration-of-sensing}.}  Others have described democracy to mean that each measurement is equally important (or unimportant)~\cite{RomWak::2007::Compressed-sensing:}.   Despite the fact that democracy is so frequently touted as an advantage of random measurements, it has received little analytical attention in the CS context.  Perhaps more surprisingly, the property has not been explicitly exploited in applications until recently~\cite{LasBouDav::2009::Demcracy-in-action:}.

The fact that random measurements are democratic seems intuitive; when using random measurements, each measurement is a randomly weighted sum of a large fraction (or all) of the entries of $\x$, and since the weights are chosen independently at random, no preference is given to any particular entries.  More concretely, suppose that the measurements $y_1, y_2, \ldots, y_M$ are independent and identically distributed (i.i.d.) according to some distribution $f_Y$, as is the case for the $\bPhi$ considered in this report.  Now suppose that we select $\widetilde{M} < M$ of the $y_i$ at random (or according to some procedure that is {\em independent} of $\y$).  Then clearly, we are left with a length-$\widetilde{M}$ measurement vector $\widetilde{\y}$ such that each $\widetilde{y}_i \sim f_Y$.  Stated another way, if we set $D = M - \widetilde{M}$, then there is no difference between collecting $\widetilde{M}$ measurements and collecting $M$ measurements and deleting $D$ of them, provided that this deletion is done independently of the actual values of $\y$.

However, following this line of reasoning will ultimately lead to a rather weak definition of democracy.  To see this, consider the case where the measurements are deleted by an adversary. By adaptively deleting the entries of $\y$ one can change the distribution of $\widetilde{\y}$.  For example, the adversary can delete the $D$ largest elements of $\y$, thereby skewing the distribution of $\widetilde{\y}$.  In many cases, especially if the same matrix $\bPhi$ will be used repeatedly with different measurements being deleted each time, it would be far better to know that {\em any} $\widetilde{M}$ measurements will be sufficient to reconstruct the signal.  This is a significantly stronger requirement.

In order to formally define this stronger notion of democracy, we must first describe the properties that a matrix must satisfy to ensure stable reconstruction.  Towards that end, we recall the definition of the {\em restricted isometry property} (RIP) for the matrix $\bPhi$~\cite{CandesDLP}.

\begin{definition}
A matrix $\bPhi$ satisfies the RIP of order $K$ with constant $\delta \in (0,1)$ if
\begin{equation} \label{eq:RIP}
(1-\delta)\|\x\|_2^2 \le \|\bPhi \x \|^2_2 \le (1+\delta)\|\x\|_2^2
\end{equation}
holds for all $\x$ such that $\| \x\|_0 \le K$.
\end{definition}
Much is known about matrices that satisfy the RIP, but for our purposes it suffices to note that if we draw a random $M \times N$ matrix $\bPhi$ whose entries $\phi_{ij}$ are i.i.d.\ sub-Gaussian random variables, then provided that
\begin{equation}
M = O\left(K\log (N/K) \right), \label{eq:ripm}
\end{equation}
we have that with high probability $\bPhi$ will satisfy the RIP of order $K$ with constant $\delta$~~\cite{BarDavDeV::2008::A-Simple-Proof,DeVoreL1IO}.

When it is satisfied, the RIP for a matrix $\bPhi$ provides a sufficient condition to guarantee successful sparse recovery using a wide variety of algorithms~\cite{CandesDLP,CandesRIP,MIP_cai,DWOMP,romp,romp2,cosamp,sp,Thresh,BluDav::2008::Iterative-hard,chartrand2008restricted}. As an example, the RIP of order $2K$ (with isometry constant $\delta < \sqrt{2}-1$) is a sufficient condition to permit $\ell_1$-minimization (the canonical convex optimization problem for sparse approximation) to exactly recover any $K$-sparse signal and to approximately recover those that are nearly sparse~\cite{CandesRIP}. The same assumption is also a sufficient condition for robust recovery in noise using a modified $\ell_1$-minimization~\cite{CandesRIP}.

The RIP also provides us with a way to quantify our notion of democracy.  To do so, we first establish some notation that will prove useful throughout this report.  Let $\Gamma \subset \{1,2,,\ldots, M\}$.  By $\bPhi^{\Gamma}$ we mean the $|\Gamma| \times M$ matrix obtained by selecting the rows of $\bPhi$ indexed by $\Gamma$.  Alternatively, if $\Lambda \subset \{1,2,\dots,N\}$, then we use $\bPhi_\Lambda$ to indicate the $M \times |\Lambda|$ matrix obtained by selecting the columns of $\bPhi$ indexed by $\Lambda$. Following~\cite{LasBouDav::2009::Demcracy-in-action:}, we now formally define democracy as follows.
\begin{definition}
Let $\bPhi$ be and $M \times N$ matrix, and let $\widetilde{M} \le M$ be given.  We say that $\bPhi$ is $(\widetilde{M},K,\delta)$-democratic if for all $\Gamma$ such that $|\Gamma|\ge \widetilde{M}$ the matrix $\bPhi^{\Gamma}$ satisfies the RIP of order $K$ with constant $\delta$.
\end{definition}

In Section~\ref{sec:simpleproof} below we present a simple proof that Gaussian matrices are democratic and demonstrate how the proof can be extended to sub-Gaussian matrices.  The core of this proof can be found in \cite{LasBouDav::2009::Demcracy-in-action:}, but is included in full in this report.  In Section~\ref{sec:disc} we discuss the implications of the result and alternative interpretations.  Section~\ref{sec:thms} contains the additional theorems required by the proof.

\section{Random matrices are democratic}
\label{sec:simpleproof}

We now demonstrate that certain randomly generated matrices are democratic.  While the theorem actually holds (with different constants) for the more general class of {\em sub-Gaussian} matrices, for simplicity we restrict our attention to Gaussian matrices.  We provide discussion of the sub-Gaussian case in Section~\ref{sec:thms}.

\begin{theorem} \label{thm:democracy}
Let $\bPhi$ by an $M\times N$ matrix with elements $\phi_{ij}$ drawn according to $\mathcal{N}(0, 1/M)$ and let $\widetilde{M} \le M$, $K < \widetilde{M}$, and $\delta \in (0,1)$ be given.  Define $D = M - \widetilde{M}$. If
\begin{align}
\label{eq:Mdem}
M = C_{1} (K + D)\log\left(\frac{N+M}{K + D}\right),
\end{align}
then with probability exceeding $1 - 3 e^{-C_2 M}$ we have that $\bPhi$ is $(\widetilde{M},K,\delta/(1-\delta))$-democratic, where $C_1$ is arbitrary and $C_2  = (\delta/8)^2 - \log(42 e/ \delta)/C_1.$
\end{theorem}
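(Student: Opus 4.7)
The plan is to reduce the democracy claim for $\bPhi$ to an RIP statement for the augmented matrix $A = [\bPhi \mid \bI_M] \in \real^{M \times (N+M)}$ and then establish that RIP by the standard concentration-plus-net recipe. The $N+M$ appearing inside the logarithm in~\eqref{eq:Mdem} is exactly the number of columns of $A$, which makes this choice of augmentation natural. For the reduction I fix any $\Gamma$ with $|\Gamma|\ge\widetilde{M}$ (so $|\Gamma^c|\le D$) and any $K$-sparse $x \in \real^N$, and define $e \in \real^M$ by $e_i = -(\bPhi x)_i$ on $\Gamma^c$ and $e_i = 0$ on $\Gamma$. Then $e$ is $D$-sparse, the augmented vector $z = [x;e]$ is $(K+D)$-sparse, and by construction $Az = \bPhi x + e$ vanishes on $\Gamma^c$, so $\|Az\|_2^2 = \|\bPhi^\Gamma x\|_2^2$ and $\|z\|_2^2 = \|x\|_2^2 + \|\bPhi^{\Gamma^c} x\|_2^2$. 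Assuming $A$ has the RIP of order $K+D$ with constant $\delta$, applying it to $z$ yields
\[
(1-\delta)\bigl(\|x\|_2^2 + \|\bPhi^{\Gamma^c}x\|_2^2\bigr) \;\le\; \|\bPhi^\Gamma x\|_2^2 \;\le\; (1+\delta)\bigl(\|x\|_2^2 + \|\bPhi^{\Gamma^c}x\|_2^2\bigr).
\]
Combining this with $\|\bPhi x\|_2^2 = \|\bPhi^\Gamma x\|_2^2 + \|\bPhi^{\Gamma^c} x\|_2^2$ and the RIP of $\bPhi$ alone (the case $e=0$, which is just RIP of $A$ restricted to its first $N$ columns) bounds $\|\bPhi^{\Gamma^c} x\|_2^2$ in terms of $\|x\|_2^2$, and a short algebraic manipulation extracts RIP of $\bPhi^\Gamma$ of order $K$ with the claimed constant $\delta/(1-\delta)$.

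For the RIP of $A$ itself I would follow the template of~\cite{BarDavDeV::2008::A-Simple-Proof}. The central step is a pointwise concentration inequality for $\|Az\|_2^2$: for any fixed $z = [z_1;z_2]$, the vector $Az = \bPhi z_1 + z_2$ is Gaussian with mean $z_2$ and covariance $(\|z_1\|_2^2/M)\bI_M$, so $\mathbb{E}\|Az\|_2^2 = \|z\|_2^2$. Expanding $\|Az\|_2^2 = \|\bPhi z_1\|_2^2 + 2 z_2^\top \bPhi z_1 + \|z_2\|_2^2$, a scaled chi-squared tail bound for the first term together with a Gaussian tail bound for the cross term give $\Pr(|\|Az\|_2^2 - \|z\|_2^2| > \epsilon\|z\|_2^2) \le c_1 e^{-c_2 \epsilon^2 M}$. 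Covering the unit sphere of $(K+D)$-sparse vectors in $\real^{N+M}$ by an $\epsilon$-net of cardinality at most $\binom{N+M}{K+D}(3/\epsilon)^{K+D}$, applying this bound pointwise, taking a union bound, and lifting from the net to the sphere by the standard argument then produces RIP of $A$ of order $K+D$ with constant $\delta$ and failure probability at most $3 e^{-C_2 M}$, provided $M$ satisfies~\eqref{eq:Mdem}. The constants $C_1$ and $C_2$ fall out of choosing $\epsilon$ proportional to $\delta$ in the net and tracking the resulting exponents; the $(\delta/8)^2$ and $\log(42e/\delta)$ appearing in the stated $C_2$ are exactly what one expects from this recipe.

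The principal obstacle is the concentration inequality: $A$ mixes random Gaussian columns with a deterministic identity block, so the off-the-shelf concentration result of~\cite{BarDavDeV::2008::A-Simple-Proof} cannot be invoked as a black box. The cross term $z_2^\top \bPhi z_1$ has variance proportional to $\|z_1\|_2^2 \|z_2\|_2^2 / M$ and must be controlled jointly with the scaled chi-squared $\|\bPhi z_1\|_2^2$ in a way that still yields a clean $e^{-c\epsilon^2 M}$ tail; this is the one place where something beyond the existing Gaussian RIP machinery is required. Once this inequality is in hand, the reduction described in the first paragraph and the net/union-bound argument in the second are essentially routine.
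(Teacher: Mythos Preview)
Your proposal is correct, and the second half (the concentration inequality for the mixed Gaussian/identity matrix, followed by the covering-plus-union-bound argument) is essentially identical to what the paper does in its Lemma~\ref{lem:conc} and Theorem~\ref{thm:rip}. You also correctly identify the concentration step as the only place requiring genuinely new work.

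The reduction step, however, is genuinely different. The paper does not construct an explicit cancelling vector $e$. Instead, it notes that the columns of $\A=[\bI\;\bPhi]$ indexed by $\Lambda=\Gamma^c$ are standard basis vectors, so the orthogonal projection $\Proj_\Lambda^\perp$ onto their complement is exactly row-zeroing, and then invokes a general lemma (Theorem~\ref{lem:PRIP}, proved via the inner-product preservation Lemma~\ref{lem:ip}) stating that $\Proj_\Lambda^\perp\A$ inherits a slightly degraded RIP from $\A$. Your construction bypasses this machinery entirely: by choosing $e$ to annihilate $(\bPhi x)_{\Gamma^c}$ you land directly on a $(K+D)$-sparse vector $z$ with $\|Az\|_2=\|\bPhi^\Gamma x\|_2$, and the lower RIP bound for $A$ immediately gives $\|\bPhi^\Gamma x\|_2^2\ge(1-\delta)\|x\|_2^2$ (just drop the nonnegative $\|\bPhi^{\Gamma^c}x\|_2^2$ term), while the upper bound follows even more simply from $\|\bPhi^\Gamma x\|_2^2\le\|\bPhi x\|_2^2\le(1+\delta)\|x\|_2^2$. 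This is shorter, more elementary, and in fact yields the sharper constant $\delta$ rather than $\delta/(1-\delta)$; you understate your own result by quoting the weaker constant. What the paper's route buys is generality: Theorem~\ref{lem:PRIP} holds for projecting out \emph{any} set of columns of an RIP matrix, not just identity columns, and is of independent interest. For the present theorem, though, your direct argument is preferable.
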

\begin{proof}
Our proof consists of two main steps.  We begin by defining the $M \times (N+M)$ matrix $\A =[\bI~\bPhi]$ formed by appending $\bPhi$ to the $M\times M$ identity matrix.  Theorem~\ref{thm:rip}, also found in \cite{LasDavBar::2009::Exact-signal}, demonstrates that under the assumptions in the theorem statement, with probability exceeding $1 - 3 e^{-C_2 M}$ we have that $\A$ satisfies the RIP of order $K+D$ with constant $\delta$. The second step is to use this fact to show that all possible $\widetilde{M} \times N$ submatrices of $\bPhi$ satisfy the RIP of order $K$ with constant $\delta/(1-\delta)$.

Towards this end, we let $\Gamma \subset \{1, 2, \ldots, M\}$ be an arbitrary subset of rows such that $| \Gamma | \ge \widetilde{M}$.  Define $\Lambda = \{1, 2, \ldots, M \} \setminus \Gamma$ and note that $|\Lambda| = D$.  Additionally, let
\begin{equation} \label{eq:Pdef}
\Proj_{\Lambda} \triangleq \A_{\Lambda} \A_{\Lambda}^{\dagger},
\end{equation}
be the orthogonal projector onto $\mathcal{R}(\A_{\Lambda})$, i.e., the range, or column space, of $\A_{\Lambda}$.\footnote{$\A_\Lambda^\dag = (\A_{\Lambda}^T \A_{\Lambda})^{-1} \A_{\Lambda}^T$ denotes the Moore-Penrose pseudoinverse of $\A_\Lambda$.}  Furthermore, we define
\begin{equation} \label{eq:Pperpdef}
\Proj_{\Lambda}^{\perp} \triangleq \I - \Proj_{\Lambda},
\end{equation}
as the orthogonal projector onto the orthogonal complement of $\mathcal{R}(\A_{\Lambda})$.  In words, this projector nulls the columns of $\A$ corresponding to the index set $\Lambda$.   Now, note that $\Lambda \subset \{1, 2, \ldots, M \}$, so $\A_\Lambda = \I_\Lambda$.  Thus,
$$
\Proj_\Lambda  = \I_\Lambda \I_\Lambda^\dag = \I_\Lambda (\I_\Lambda^T \I_\Lambda)^{-1} \I_\Lambda^T = \I_\Lambda \I_\Lambda^T = \I(\Lambda),
$$
where we use $\I(\Lambda)$ to denote the $M \times M$ matrix with all zeros except for ones on the diagonal entries corresponding to the columns indexed by $\Lambda$.  (We distinguish the $M \times M$ matrix $\I(\Lambda)$ from the $M \times D$ matrix $\I_\Lambda$ --- in the former case we replace columns not indexed by $\Lambda$ with zero columns, while in the latter we remove these columns to form a smaller matrix.)  Similarly, we have
$$
\Proj_\Lambda^\perp = \I - \Proj_\Lambda = \I(\Gamma).
$$
Thus, we observe that the matrix $\Proj_\Lambda^\perp \A = \I(\Gamma) \A$ is simply the matrix $\A$ with zeros replacing all entries on any row $i$ such that $i \notin \Gamma$, i.e., $( \Proj_\Lambda^\perp \A )^\Gamma = \A^\Gamma$ and $( \Proj_\Lambda^\perp \A )^\Lambda = \boldsymbol{0}$.  Furthermore, Theorem~\ref{lem:PRIP}, also found in \cite{SPARS}, states that for $\A$ satisfying the RIP of order  $K+D$ with constant $\delta$, we have that
\begin{align}
\label{eq:projrip}
\left(1 - \frac{\delta}{1-\delta} \right) \|\u\|_2^2 \le \| \Proj_\Lambda^\perp \A \u\|_2^2 \le (1+\delta) \|\u\|_2^2,
\end{align}
holds for all $\u \in \mathbb{R}^{N+M}$ such that $\|\u\|_{0} = K + D - |\Lambda| = K$ and $\supp(\u)\cap\Lambda = \emptyset$.  Equivalently, letting $\Lambda^c = \{1,2, \ldots, N+M\} \setminus \Lambda$, this result states that $(\I(\Gamma) \A)_{\Lambda^c}$ satisfies the RIP of order $K$ with constant $\delta/(1-\delta)$.   To complete the proof, we note that if $(\I(\Gamma) \A)_{\Lambda^c}$ satisfies the RIP of order $K$ with constant $\delta/(1-\delta)$, then we trivially have that $\I(\Gamma) \bPhi$ also has the RIP of order at least $K$ with constant $\delta/(1-\delta)$, since $\I(\Gamma) \bPhi$ is just a submatrix of $(\I(\Gamma) \A)_{\Lambda^c}$.  Since $\|\I(\Gamma) \bPhi \x\|_2 = \| \bPhi^\Gamma \x \|_2$, this establishes the theorem.
\end{proof}

\section{Discussion}
\label{sec:disc}

\subsection{Robustness and stability}

Observe that we require roughly $O(D \log(N) )$ additional measurements to ensure that $\bPhi$ is $(\widetilde{M}, K, \delta)$-democratic compared to the number of measurements required to simply ensure that $\bPhi$ satisfies the RIP of order $K$.   This seems intuitive;  if we wish to be robust to the loss of any $D$ measurements while retaining the RIP of order $K$, then we should expect to take {\em at least} $D$ additional measurements. This is not unique to the CS framework.  For instance, by \emph{oversampling}, i.e., sampling faster than the minimum required Nyquist rate, uniform sampling systems can also improve robustness with respect to the loss of measurements.  However, a benefit of the democratic CS system is that the number of additional measurements needed grows more slowly than in the Nyquist case.   To see this, consider the case where we lose $D$ samples or measurements.  For a fixed time period, suppose that sampling the signal at the Nyquist rate yields $N$ samples.  To be robust to the loss of a contiguous block of $D$ samples, we must sample at $D+1$ times the Nyquist rate, yielding $DN$ additional samples.   In contrast, the number of additional measurements needed for a CS measurement system to be democratic is $O(D\log(N))$, given by (\ref{eq:Mdem}).  Thus, the number of additional samples required by a Nyqust sampler depends linearly on $D$ and $N$ while the number of additional measurements for democratic CS systems is still linear in $D$ but only {\em logarithmic} in $N$.  If $N$ is large, this can result in tremendous savings.  Note also that for a fixed $N$ and $K$, by driving $M$ higher a CS measurement system can be robust to the loss of a large fraction of the acquired measurements, whereas in Nyquist oversampling, the fraction of (consecutive) samples that can be dropped can never exceed $1/N$.

In some applications, this difference may have significant impact.  For example, in finite dynamic range quantizers, the measurements saturate when their magnitude exceeds some level.  Thus, when uniformly sampling with a low saturation level, if one sample saturates, then the likelihood that any of the neighboring samples will saturate is high, and significant oversampling may be required to ensure any benefit.  However, in CS, if many adjacent measurements were to saturate, then for only a slight increase in the number of measurements we can mitigate this kind of error by simply rejecting the saturated measurements; the fact that $\bPhi$ is democratic ensures that this strategy will be effective.

In addition to robustness, Theorem~\ref{thm:democracy} implies that reconstruction from a subset of CS measurements is \emph{stable} to the loss of a potentially larger number of measurements than anticipated.  To see this, suppose that and $M \times N$ matrix $\bPhi$ is $(M-D, K, \delta)$-democratic, but consider the situation where $D + \widetilde{D}$ measurements are dropped. It is clear from the proof of Theorem~\ref{thm:democracy} that if $\widetilde{D} < K$, then the resulting matrix $\bPhi^\Gamma$ will satisfy the RIP of order $K- \widetilde{D}$ with constant $\delta$.  Thus, from~\cite{CanRomTao::2006::Stable-signal}, if we define $\widetilde{K} = (K-\widetilde{D})/2$, then the reconstruction error is then bounded by
\begin{align}
\|\x - \widehat{\x}\|_{2} \leq C_{3}\frac{\|\x - \x_{\widetilde{K}}\|_{1}}{\sqrt{\widetilde{K}}},
\end{align}
where $\x_{\widetilde{K}}$ denotes the best $\widetilde{K}$-term approximation of $\x$ and $C_{3}$ is an absolute constant depending on $\bPhi$ that can be bounded using the constants derived in Theorem~\ref{thm:democracy}.  Thus, if $\widetilde{D}$ is small then the additional error caused by dropping too many measurements will also be relatively small.  In contrast, there is simply no analog to this kind of stability result for uniform sampling with linear reconstruction.  When the number of dropped samples exceeds $D$ (where $D$ represents the oversampling factor described above), there is are no guarantees as to the accuracy of the reconstruction.

\subsection{Numerical exploration}
As discussed previously, the democracy property is a stronger condition than the RIP.  To demonstrate this, we perform a numerical simulation which illustrates this point.  Specifically, we would like to compare the case where the measurements are dropped at random versus the case where the dropped measurements are selected by an adversary.  Ideally, we would like to know whether the resulting matrices satisfy the RIP.  Of course, this experiment is impossible to perform for two reasons: first, determining if a matrix satisfies the RIP is computationally intractable as it would require checking all possible $K$-dimensional sub-matrices of $\bPhi^\Gamma$.  Moreover, in the adversarial setting one would also have to search for the worst possible $\Gamma$ as well, which is impossible for the same reason.  Thus, we instead perform a far simpler experiment, which serves as a very rough proxy to the experiment we would like to perform.

The experiment proceeds over $100$ trials as follows.  We fix the parameters $N=2048$ and $K=13$ and vary $M$ in the range $(0,380)$.  In each trial we draw a new matrix $\Phi$ with $\phi_{ij}\sim\mathcal{N}(0, 1/M)$ and a new signal with $K$ nonzero coefficients, also drawn from a Gaussian distribution, and then the signal is normalized $\|\x\|_{2} = 1$. Over each set of trials we estimate two quantities:
\begin{enumerate}
\item the maximum $D$ such that we achieve exact reconstruction for a randomly selected  $(M-D)\times N$ submatrix of $\Phi$ on each of the $100$ trials;
\item the maximum $D$ such that we achieve exact reconstruction for $R=300$ randomly selected $(M-D)\times N$ submatrices of $\Phi$ on each of the $100$ trials..
\end{enumerate}
Ideally, the second case should consider \emph{all} $(M-D)\times N$ submatrices of $\Phi$ rather than just 300 submatrices, but as this is not possible (for reasons discussed above) we simply perform a random sampling of the space of possible submatrices.  Note also that exact recovery on one signal is also {\em not} proof that the matrix satisfies the RIP, although failure {\em is} proof that the matrix does not.

\begin{figure}[t] 
   \centering
   \includegraphics[width=.5\linewidth]{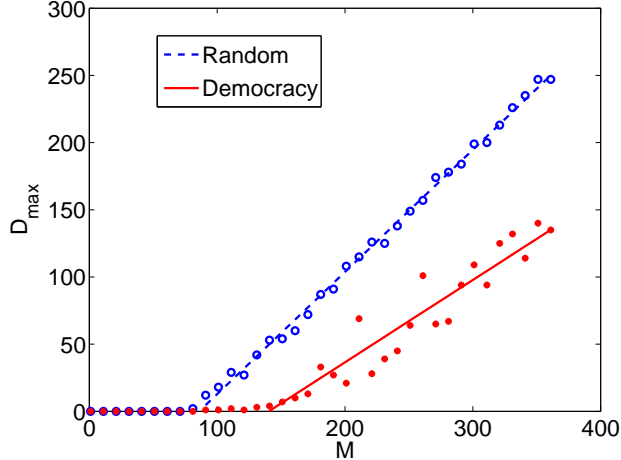}
   \caption{Maximum number of measurements that can be dropped $D_{\max}$ vs. number of measurements $M$ for (a) exact recovery of one $(M-D)\times N$ submatrix of $\bPhi$ and (b) exact recovery of $R=300$ $(M-D)\times N$ submatrices of $\bPhi$.}
   \label{fig:Ddemo}
\end{figure}

The results of this experiment are depicted in Figure~\ref{fig:Ddemo}.  The circles denote data points with the empty circles corresponding to the random selection experiment and the solid circles corresponding to the democracy experiment.  The lines denote the best linear fit for each data set where $D>0$, with the dashed line corresponding to the random selection experiment  and the solid line corresponding to democracy experiment.

The maximum $D$ corresponding to the random selection experiment grows linearly in $M$ (with coefficient 1) once the minimum number of measurements required for RIP, denoted by $M'$, is reached. This is because beyond this point at most $D = M-M'$ measurements can be discarded.  As demonstrated by the plot, $M' \approx 90$ for this experiment.  For the democracy experiment $M'\approx 150$, larger than for the RIP experiment.  Furthermore, the maximum $D$ for democracy grows more slowly than for the random selection case, which indicates that to be robust to the loss of {\em any} $D$ measurements, $C D$ additional measurements, with $C>1$, are actually {\em necessary}.

\section{Theorems}
\label{sec:thms}

In this section, we prove the two supporting Theorems used in the proof of Theorem~\ref{thm:democracy}.  We begin by demonstrating that the matrix $\A = [\I~ \bPhi]$ satisfies the RIP.  To do so, we first establish the following lemma, that closely parallels the result in equation (4.3) of~\cite{BarDavDeV::2008::A-Simple-Proof}.  The lemma demonstrates that for any $\u$, if we draw $\bPhi$ at random, then $\| \A \u \|_2$ is concentrated around $\|\u\|_2$.

\begin{lemma} \label{lem:conc}
Let $\bPhi$ by an $M\times N$ matrix with elements $\phi_{ij}$ drawn i.i.d. according to $\mathcal{N}(0, 1/M)$ and let $\A = [\I~ \bPhi]$. Furthermore, let $\u \in \mathbb{R}^{N+M}$ be an arbitrary vector with first $M$ entries denoted by $\w$ and last $N$ entries denoted by $\x$.  Let $\eta \in (0,1)$ be given.  Then
\begin{equation} \label{eq:exp}
\mathbb{E}\left( \| \A \u\|_2^2 \right) = \| \u \|_2^2
\end{equation}
and
\begin{equation} \label{eq:conc}
\mathbb{P}\left( \left| \| \A \u\|_2^2 - \|\u\|_2^2 \right| \ge  2 \eta \| \u\|_2^2 \right) \le 3 e^{-M \eta^2 / 8}.
\end{equation}
\end{lemma}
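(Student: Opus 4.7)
The plan is to expand $\|\A\u\|_2^2$ by splitting $\u = (\w,\x)$, so that $\A\u = \w + \bPhi\x$, and then analyze the two stochastic pieces separately. Explicitly,
\begin{equation*}
\|\A\u\|_2^2 \;=\; \|\w\|_2^2 \;+\; 2\langle \w, \bPhi\x\rangle \;+\; \|\bPhi\x\|_2^2.
\end{equation*}
The first term is deterministic and already contributes $\|\w\|_2^2$. For the expectation (\ref{eq:exp}), I would note that $\mathbb{E}[\bPhi\x] = 0$ kills the cross term, and that a direct per-entry computation gives $\mathbb{E}[\|\bPhi\x\|_2^2] = \|\x\|_2^2$ because each row of $\bPhi\x$ has variance $\|\x\|_2^2/M$ and there are $M$ rows. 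Summing yields $\|\w\|_2^2 + \|\x\|_2^2 = \|\u\|_2^2$.

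For the concentration (\ref{eq:conc}), the deviation is
\begin{equation*}
\|\A\u\|_2^2 - \|\u\|_2^2 \;=\; 2\langle \w,\bPhi\x\rangle \;+\; \bigl(\|\bPhi\x\|_2^2 - \|\x\|_2^2\bigr),
\end{equation*}
so by the triangle inequality and a union bound it suffices to show that each summand is bounded by $\eta\|\u\|_2^2$ with the correct probability. The chi-squared term $\|\bPhi\x\|_2^2 - \|\x\|_2^2$ is handled by the classical Gaussian concentration inequality (equation (4.3) of \cite{BarDavDeV::2008::A-Simple-Proof}), which gives $\mathbb{P}\bigl(|\|\bPhi\x\|_2^2 - \|\x\|_2^2| \ge \eta\|\x\|_2^2\bigr) \le 2 e^{-M\eta^2/8}$; since $\|\x\|_2^2 \le \|\u\|_2^2$, the same bound controls deviations of size $\eta\|\u\|_2^2$.

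For the cross term I would use the fact that, because $\bPhi$ has i.i.d.\ $\mathcal{N}(0,1/M)$ entries, $\bPhi\x \sim \mathcal{N}(0,(\|\x\|_2^2/M)\,\I_M)$, so $\langle \w,\bPhi\x\rangle$ is a one-dimensional Gaussian with mean $0$ and variance $\|\w\|_2^2\|\x\|_2^2/M$. A standard Gaussian tail bound at level $\eta\|\u\|_2^2/2$ yields a probability at most $2\exp\!\bigl(-M\eta^2\|\u\|_2^4/(8\|\w\|_2^2\|\x\|_2^2)\bigr)$. The key trick is AM--GM: $\|\w\|_2^2\|\x\|_2^2 \le (\|\w\|_2^2+\|\x\|_2^2)^2/4 = \|\u\|_2^4/4$, which promotes the exponent to $-M\eta^2/2$, comfortably below the $-M\eta^2/8$ rate coming from the chi-squared piece.

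Combining the two bounds by the union bound gives a total probability of at most $2e^{-M\eta^2/8} + 2e^{-M\eta^2/2}$, which is absorbed into $3e^{-M\eta^2/8}$ for $\eta\in(0,1)$, yielding the stated estimate. The degenerate cases $\w=0$ or $\x=0$ are trivial: in the first case $\A\u=\bPhi\x$ and the statement reduces to the standard concentration, while in the second $\A\u=\w$ is deterministic. The main obstacle is really just the bookkeeping for the cross term --- making sure that the AM--GM step is actually sharp enough to fold both tails into a single exponent of the correct form, rather than doing anything substantively difficult.
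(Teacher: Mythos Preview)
Your approach is essentially identical to the paper's: both expand $\A\u=\w+\bPhi\x$, treat $\|\bPhi\x\|_2^2-\|\x\|_2^2$ by the standard Gaussian/chi-square concentration bound, recognize $2\w^T\bPhi\x$ as a scalar Gaussian with variance $4\|\w\|_2^2\|\x\|_2^2/M$, and combine by a union bound. The only cosmetic difference is ordering---the paper bounds the cross term at threshold $\eta\|\w\|_2\|\x\|_2$ and only afterward uses $\|\w\|_2\|\x\|_2\le\|\u\|_2^2$, whereas you set the threshold at $\eta\|\u\|_2^2$ and push the AM--GM inequality into the exponent; these are the same idea.

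One bookkeeping slip: your final absorption $2e^{-M\eta^2/8}+2e^{-M\eta^2/2}\le 3e^{-M\eta^2/8}$ is equivalent to $2\le e^{3M\eta^2/8}$, which fails for small $M\eta^2$. The fix is immediate: use the sharper Gaussian tail bound $Q(z)\le\tfrac12 e^{-z^2/2}$ (as the paper does), so the cross-term contributes $e^{-M\eta^2/2}$ with coefficient $1$ rather than $2$, and then $2e^{-M\eta^2/8}+e^{-M\eta^2/2}\le 3e^{-M\eta^2/8}$ holds for all $M,\eta$.
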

\begin{proof}
We first note that since $\A \u = \w + \bPhi \x$, we have that
\begin{align}
\| \A \u\|_2^2 & = \| \w + \bPhi \x \|_2^2 \notag \\
 & = (\w + \bPhi \x)^T (\w + \bPhi \x) \notag \\
 & = \w^T\w + 2 \w^T \bPhi \x + \x^T \bPhi^T \bPhi \x \notag \\
 & = \|\w\|_2^2 + 2 \w^T \bPhi \x + \| \bPhi \x \|_2^2. \label{eq:split}
\end{align}
Since the entries $\phi_{ij}$ are i.i.d.\ according to $\mathcal{N}(0,1/M)$, it is straightforward to show that $\mathbb{E} \left( \| \bPhi \x \|_2^2 \right) = \|\x\|_2^2$ (see, for example,~\cite{DasGup::1999::An-Elementary-Proof}).  Similarly, one can also show that $2 \w^T \bPhi \x \sim \mathcal{N} \left( 0,4 \| \w \|_2^2 \|\x\|_2^2 / M\right)$, since the elements of $\bPhi\x$ are distributed as zero mean Gaussian variables with variance $\|\x\|_{2}^{2}/M$.   Thus, from (\ref{eq:split}) we have that
$$
\mathbb{E} \left( \| \A \u\|_2^2 \right) = \| \w \|_2^2 + \| \x \|_2^2 ,
$$
and since $\| \u \|_2^2 = \| \w \|_2^2 + \| \x \|_2^2 + $, this establishes (\ref{eq:exp}).

We now turn to (\ref{eq:conc}).  Using the arguments in~\cite{DasGup::1999::An-Elementary-Proof}, one can show that
\begin{equation} \label{eq:conc1}
\mathbb{P}\left( \left| \| \bPhi \x\|_2^2 - \|\x\|_2^2 \right| \ge \eta \| \x \|_2^2 \right) \le 2 e^{-M \eta^2 / 8}.
\end{equation}
As noted above, $2 \w^T \bPhi \x \sim \mathcal{N} \left( 0,4 \| \w \|_2^2 \|\x\|_2^2 / M\right)$.  Hence, we have that
\begin{align}
\mathbb{P} \left( \left| 2 \w^T \bPhi \x \right| \ge \eta  \| \w \|_2 \| \x\|_2 \right) =~ & 2 Q \left( \frac{ \eta \| \w \|_2 \| \x\|_2 }{2 \| \w \|_2 \|\x\|_2  / \sqrt{M}} \right) \notag\\
=~ & 2 Q (\sqrt{M} \eta / 2) \notag,
\end{align}
where $Q(\cdot)$ denotes the tail integral of the standard Gaussian distribution.  From (13.48) of \cite{JKB-CUDV1} we have that
$$
Q(z) \le \frac{1}{2} e^{-z^2/2}
$$
and thus we obtain
\begin{equation} \label{eq:conc2}
\mathbb{P} \left( \left| 2 \w^T \bPhi \x \right| \ge \eta  \| \w \|_2 \| \x\|_2 \right) \le  e^{- M \eta^2 / 8}.
\end{equation}
Thus, combining (\ref{eq:conc1}) and (\ref{eq:conc2}) we obtain that with probability at least $1 - 3 e^{-M \eta^2 / 8}$ we have that both
\begin{equation} \label{eq:conc3}
(1- \eta)\| \x \|_2^2 \le \| \bPhi \x\|_2^2 \le (1+\eta) \| \x \|_2^2
\end{equation}
and
\begin{equation} \label{eq:conc4}
- \eta \| \w \|_2 \| \x \|_2 \le 2 \w^T \bPhi \x \le \eta \| \w \|_2 \| \x\|_2.
\end{equation}
Using (\ref{eq:split}), we can combine (\ref{eq:conc3}) and (\ref{eq:conc4}) to obtain
\begin{align*}
\| \A \u\|_2^2 & \le \| \w \|_2^2 + \eta \| \w \|_2 \| \x\|_2 + (1+\eta) \| \x \|_2^2 \\
 & \le (1+\eta) \left( \| \w \|_2^2 + \| \x \|_2^2 \right) + \eta \| \w \|_2 \| \x\|_2 \\
 & \le (1+\eta) \|\u\|_2^2 + \eta \|\u\|_2^2 \\
 & = (1+2\eta) \| \u \|_2^2,
\end{align*}
where the last inequality follows from the fact that $\| \w \|_2 \| \x \|_2  \le \| \u \|_2 \| \u \|_2$.  Similarly, we also have that
$$
\| \A \u\|_2^2 \ge (1-2\eta) \| \u \|_2^2,
$$
which establishes (\ref{eq:conc}).
\end{proof}

We note that while the above proof assumes that the entries of $\bPhi$ are Gaussian, this proof holds with essentially no modifications for a wide class of {\em sub-Gaussian}
distributions.  A random variable $X$ is sub-Gaussian if there exists a constant $C > 0$ such that
\begin{equation} \label{eq:sgdef}
\mathbb{E} \left( e^{X t} \right) \le e^{ C^2t^2/2}
\end{equation}
for all $t \in \mathbb{R}$.  This says that the moment-generating
function of our distribution is dominated by that of a Gaussian
distribution, which is also equivalent to requiring that the tails
of our distribution decay {\em at least as fast} as the tails of a
Gaussian distribution. Examples of sub-Gaussian distributions
include the Gaussian distribution, the Rademacher distribution, and
the uniform distribution.  In general, any distribution with bounded support is sub-Gaussian.  See~\cite{SG} for more discussion on sub-Gaussian random variables.  It can be shown (see Lemma 6.1 of~\cite{DeVoreL1IO} or~\cite{SubGaussCNX}) that if the entries of $\bPhi$ are drawn according to a sub-Gaussian distribution, then (\ref{eq:conc1}) holds where 8 is replaced with a constant that depends on the constant $C$ in (\ref{eq:sgdef}).  Similarly, the cross-term $\left| 2 \w^T \bPhi \x \right| $ is also a sub-Gaussian random variable, and so using elementary results in~\cite{SG}, a bound analogous to (\ref{eq:conc2}) can be obtained.

Using Lemma~\ref{lem:conc}, we now demonstrate that the matrix $\A$ satisfies the RIP provided that $M$ is sufficiently large.  

\begin{theorem}
\label{thm:rip}
Let $\bPhi$ be an $M\times N$ matrix with elements $\phi_{ij}$ drawn according to $\mathcal{N}(0, 1/M)$ and let let $\A = [\I~ \bPhi]$.  If
\begin{align}
\label{eq:justice}
M = C_1 (K + D)\log\left(\frac{N+M}{K + D}\right)
\end{align}
then with probability exceeding $1-3 e^{-C_2 M}$ we have that $\A$ satisfies the RIP of order $(K+D)$ with constant $\delta$, where $C_1$ is arbitrary and $C_2  = (\delta/8)^2 - \log(42 e/ \delta)/C_1.$
\end{theorem}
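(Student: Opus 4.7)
The plan is to follow the standard covering-argument template for RIP of random matrices, with Lemma~\ref{lem:conc} playing the role of the pointwise concentration bound. The two novel elements, compared with the usual proof for $\bPhi$ alone, are that (i) the ambient dimension has increased from $N$ to $N+M$ because of the identity block, and (ii) the order of the RIP is $K+D$ rather than $K$. Both of these are accounted for by the form of (\ref{eq:justice}).

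First I would fix an index set $T\subset\{1,2,\ldots,N+M\}$ with $|T|=K+D$ and look at the unit sphere $S_T$ in the $(K+D)$-dimensional coordinate subspace it determines. A standard volumetric argument yields a finite $\epsilon$-net $\mathcal{Q}_T\subset S_T$ of cardinality at most $(1+2/\epsilon)^{K+D}\le(3/\epsilon)^{K+D}$. Applying Lemma~\ref{lem:conc} with $\eta=\delta/4$ to each $\u\in\mathcal{Q}_T$ and taking a union bound shows that, except with probability $3(3/\epsilon)^{K+D}\,e^{-M(\delta/4)^2/8}$, every net point satisfies $(1-\delta/2)\|\u\|_2^2\le\|\A\u\|_2^2\le(1+\delta/2)\|\u\|_2^2$.

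Next I would use the usual net-to-subspace lemma: if the isometry bound with constant $\delta/2$ holds for every point of an $\epsilon$-net of $S_T$ with a sufficiently small $\epsilon$ (say $\epsilon\asymp\delta$), then the full RIP with constant $\delta$ holds uniformly over the subspace spanned by $T$. Choosing $\epsilon$ of the form $\delta/c$ turns the covering number into $(3c/\delta)^{K+D}$, and after collecting exponents the bad-event probability on $T$ is at most $3\exp\bigl((K+D)\log(42e/\delta)-M(\delta/8)^2\bigr)$ once one absorbs the net constants appropriately — this is where the particular value $(\delta/8)^2$ and the factor $42e/\delta$ in the statement are produced.

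Finally, I would union-bound over all $\binom{N+M}{K+D}\le\bigl(e(N+M)/(K+D)\bigr)^{K+D}$ choices of $T$, giving a total failure probability bounded by
\begin{equation*}
3\exp\!\left[(K+D)\log\!\left(\tfrac{e(N+M)}{K+D}\right)+(K+D)\log\!\left(\tfrac{42e}{\delta}\right)-M\left(\tfrac{\delta}{8}\right)^{\!2}\right].
\end{equation*}
Substituting the hypothesis $M=C_1(K+D)\log\bigl((N+M)/(K+D)\bigr)$ collapses the first bracketed term into $M/C_1$ and, once the logarithmic term $\log(42e/\delta)$ is absorbed through the factor $1/C_1$ as in the statement, the exponent reduces to $-C_2 M$ with $C_2=(\delta/8)^2-\log(42e/\delta)/C_1$, yielding the claimed $1-3e^{-C_2 M}$ success probability. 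The main obstacle is bookkeeping: tracking the $\epsilon$ in the net-to-subspace step so that the covering-number constant comes out to exactly $42e/\delta$, and verifying that the $\log(e(N+M)/(K+D))$ from the subset count and the $\log(3/\epsilon)$ from the net really do merge cleanly into the single factor $\log\bigl((N+M)/(K+D)\bigr)$ in (\ref{eq:justice}) after absorbing the lower-order terms into $C_2$.
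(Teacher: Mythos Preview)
Your proposal is correct and follows essentially the same covering-argument approach as the paper's own proof. The only bookkeeping details the paper pins down that you leave vague are the specific choices $\eta=\delta/(2\sqrt{2})$ and $\epsilon=\delta/14$ (giving $\eta^2/8=(\delta/8)^2$ and $3e/\epsilon=42e/\delta$ exactly), together with the crude multiplicative bound $\log\bigl(\tfrac{3e}{\epsilon}\cdot\tfrac{N+M}{K+D}\bigr)\le\log\bigl(\tfrac{3e}{\epsilon}\bigr)\log\bigl(\tfrac{N+M}{K+D}\bigr)$, which is what allows the substitution $(K+D)\log\bigl((N+M)/(K+D)\bigr)=M/C_1$ to collapse the exponent cleanly to $-C_2 M$.
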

\begin{proof}
First note that it is enough to prove (\ref{eq:justice}) in the case
$\|\vx\|_2=1$, since $\A$ is linear.  Next,
fix an index set $J \subset \{1, 2, \ldots, N+M\}$ with $|J| = K+D$, and
let $X_J$ denote the $(K+D)$-dimensional subspace spanned by the columns
of $\A$ indexed by $J$. We choose a finite set of points $S_J$
such that $S_J \subseteq X_J$, $\|\vs\|_2\le 1$ for all $\vs \in
S_J$, and for all $\vx \in X_J$ with $\|\vx\|_2\le 1$ we have
\begin{equation} \label{have1}
\min_{\vs \in S_J}\|\vx- \vs\|_2 \le \epsilon.
\end{equation}
One can show (see Chapter 15 of \cite{Lorentz}) that such a set $S_J$
exists with $|S_J| \le (3/\epsilon)^{K+D}$. We then repeat this process
for each possible index set $J$, and collect all the sets $S_J$
together
\begin{equation}
S = \bigcup_{J : |J|=K+D} S_J.
\end{equation}
There are $\binom{N+M}{K+D} \le \left(e\frac{N+M}{K+D}\right)^{K+D} $ possible index sets $J$, and
hence $|S| \le \left(\frac{3 e}{\epsilon} \frac{N+M}{K+D}\right)^{K+D}$.  We now use the union bound to apply Lemma~\ref{lem:conc} to this set of points such that, with probability exceeding
\begin{equation}
\label{eq:unionprob}
1 - 3 \left(\frac{3 e}{\epsilon} \frac{N+M}{K+D}\right)^{K+D} e^{-\frac{M \eta^2}{8}} = 1 - 3 e^{- \frac{M \eta^2}{8} + (K+D) \log \left(\frac{3 e}{\epsilon} \frac{N+M}{K+D}\right)},
\end{equation}
we have
\begin{equation*}
(1 - 2\eta) \|\vs\|_2^2 \le \|\A \vs\|_2^2 \le (1+ 2\eta)
\|\vs\|_2^2,\quad {\rm for \ all}\ \vs\in S.
\end{equation*}

We now define $\Sigma_{K+D} = \{ \x : \|\x\|_0 \le K+D \}$.  We define $B$ as the smallest number such that
\begin{equation} \label{eq:BBound}
\|\A \vx \|^2_2 \le B \|\vx\|^2_2,\quad {\rm for \
all}\ \vx \in \Sigma_{K+D}, \ \|\vx\|_2 \le 1.
\end{equation}
Our goal is to show that $B \le \sqrt{1+\delta}$.  For this, we recall that for
any $\vx \in \Sigma_{K+D}$ with $\|\vx\|_2 \le 1$, we can pick a $\vs
\in S$ such that $\|\vx - \vs\|_2 \le \epsilon$ and such that $\vx -
\vs \in \Sigma_{K+D}$ (since if $\vx \in X_J$, we can pick $\vs \in S_J
\subset X_J$ satisfying $\|\vx - \vs\|_2 \le \epsilon$). In this
case we have
\begin{equation*}
\|\A \vx \|_2 \le \|\A \vs \|_2+
\|\A(\vx-\vs)\|_2 \le \sqrt{1 + 2\eta} + \sqrt{B}
\epsilon.
\end{equation*}
Since by definition $B$ is the smallest number for which (\ref{eq:BBound}) holds, we obtain $\sqrt{B} \le
\sqrt{1 + 2\eta}+\sqrt{B}  \epsilon$, which upon rearranging yields $\sqrt{B} \le \sqrt{1 + 2\eta}/(1-\epsilon)$.  One can show that by setting $\epsilon = \delta/14$ and $\eta = \delta/2 \sqrt{2}$, we have that $\sqrt{1 + 2\eta}/(1-\epsilon) \le \sqrt{1+\delta}$, which establishes the upper inequality in (\ref{eq:RIP}).  The lower
inequality follows from this since
\begin{equation*}
\|\A \vx \|_2 \ge \|\A \vs\|_2- \|\A(\vx-\vs)\|_2 \ge \sqrt{1 - 2\eta} - \sqrt{1 + \delta} \frac{\epsilon}{1-\epsilon}  \geq \sqrt{1-\delta},
\end{equation*}
where the last inequality again holds with $\epsilon = \delta/14$ and $\eta = \delta/2 \sqrt{2}$.  This establishes the theorem.  To arrive at the formula for $C_2$ we first bound the result in (\ref{eq:unionprob}) using
$$
\log \left( \frac{3 e}{\epsilon} \frac{N+M}{K+D} \right) \le \log \left( \frac{3 e}{\epsilon} \right) \log \left( \frac{N+M}{K+D} \right)
$$
and then we replace $(K+D) \log( (N+M)/(K+D) )$ with $M/C_1$.  After simplification, this yields $C_2 = \eta^2/8 - \log(3 e / \epsilon) / C_1$.  By substituting the values for $\epsilon$ and $\eta$, we obtain the desired result.
\end{proof}

In Theorem~\ref{lem:PRIP} below, we show that the matrix $\Proj_{\Lambda}^{\perp}\A$ satisfies a modified version of the RIP.   We begin with an elementary lemma that is a straightforward generalization of Lemma 2.1 of~\cite{CandesRIP}, and states that RIP operators approximately preserve inner products between sparse vectors.

\begin{lemma} \label{lem:ip}
Let $\u,\v \in \reals^N$ be given, and suppose that a matrix $\A$
satisfies the RIP of order $\max(\|\u+\v\|_0,\|\u-\v\|_0)$ with isometry
constant $\delta$. Then
\begin{equation} \label{eq:ip-bound}
\left| \langle \A \u, \A \v \rangle  - \langle \u , \v \rangle
\right| \le \delta \|\u\|_2 \|\v\|_2.
\end{equation}
\end{lemma}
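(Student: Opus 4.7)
The plan is to exploit the polarization identity to reduce the claim to an application of the RIP on the vectors $\u+\v$ and $\u-\v$. Concretely, the identity
\[
4\bigl(\langle \A\u, \A\v\rangle - \langle\u,\v\rangle\bigr) = \bigl(\|\A(\u+\v)\|_2^2 - \|\u+\v\|_2^2\bigr) - \bigl(\|\A(\u-\v)\|_2^2 - \|\u-\v\|_2^2\bigr)
\]
converts a statement about inner products into one about the RIP distortion of two sparse vectors, which is exactly what the hypothesis controls.

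My first step would be a normalization: if either $\u$ or $\v$ is zero the bound is trivial, and otherwise both sides of the inequality scale as $\|\u\|_2\|\v\|_2$ under $\u\mapsto\u/\|\u\|_2$, $\v\mapsto\v/\|\v\|_2$, so it suffices to prove $|\langle \A\u,\A\v\rangle - \langle\u,\v\rangle|\le\delta$ in the unit-norm case.

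Next I would apply the RIP hypothesis to the two vectors $\u\pm\v$, both of which are sparse enough for the hypothesis to bite by construction, giving $\bigl|\|\A(\u\pm\v)\|_2^2 - \|\u\pm\v\|_2^2\bigr|\le\delta\|\u\pm\v\|_2^2$. Combining these with the polarization identity via the triangle inequality, and then invoking the parallelogram law
\[
\|\u+\v\|_2^2 + \|\u-\v\|_2^2 = 2(\|\u\|_2^2+\|\v\|_2^2) = 4,
\]
collapses the bound to $|4(\langle\A\u,\A\v\rangle-\langle\u,\v\rangle)|\le 4\delta$, which is the desired inequality after undoing the normalization.

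The one delicate point is a piece of sparsity bookkeeping in the normalization step: when $\|\u\|_2\ne\|\v\|_2$, rescaling can break entrywise cancellations and strictly increase $\|\tilde\u\pm\tilde\v\|_0$ beyond $\|\u\pm\v\|_0$, so in principle the RIP order required for the scaled vectors may exceed the one assumed. The clean resolution is to read the hypothesis as the RIP holding over the full support union $\supp(\u)\cup\supp(\v)$, which dominates both $\|\u+\v\|_0$ and $\|\u-\v\|_0$ and is obviously invariant under normalization. Under this reading — and in the downstream use inside Theorem~\ref{lem:PRIP}, where this subtlety does not even arise — the polarization-plus-RIP argument goes through with no additional machinery.
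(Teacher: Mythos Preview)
Your proof is correct and follows essentially the same route as the paper: normalize to unit vectors, apply the RIP to $\u\pm\v$, and combine via the polarization/parallelogram identity to extract the inner-product bound. You are in fact more careful than the paper about the normalization step --- the paper simply invokes bilinearity without flagging the sparsity bookkeeping issue you raise --- and your observation that the downstream use in Theorem~\ref{lem:PRIP} involves disjointly supported vectors (so the subtlety evaporates there) is exactly right.
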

\begin{proof}
We first assume that $\|\u\|_2 = \|\v\|_2 = 1$.  From the fact that
$$
\|\u \pm \v\|_2^2 = \|\u\|_2^2 + \|\v\|_2^2 \pm 2 \langle \u,\v\rangle = 2
\pm 2\langle \u, \v \rangle$$ and since $\A$ satisfies the RIP, we
have that
$$
(1-\delta)(2 \pm 2\langle \u,\v \rangle) \le \|\A \u \pm \A \v \|_2^2 \le (1+\delta)(2 \pm 2\langle \u,\v \rangle).
$$
From the parallelogram identity we obtain
\begin{eqnarray}
\langle \A \u, \A \v \rangle
&=& \frac{1}{4} \left( \|\A \u +
\A \v \|_2^2 - \|\A \u - \A \v \|_2^2 \right) \\
&\le& \frac{(1 +
\langle \u,\v \rangle)(1+\delta) - (1 -  \langle \u,\v
\rangle)(1-\delta)}{2}
 = \langle \u,\v \rangle + \delta.
\end{eqnarray}
Similarly, one can show that $\langle \A \u, \A \v \rangle \ge
\langle \u,\v \rangle - \delta$, and thus
$|\langle \A \u, \A \v \rangle - \langle \u,\v \rangle| \le \delta$.  The result follows for
$\u$, $\v$ with arbitrary norm from the bilinearity of the inner product.
\end{proof}

One consequence of this result is that sparse vectors that are orthogonal in $\reals^N$ remain nearly orthogonal after the
application of $\A$. From this observation, it was demonstrated
independently in~\cite{SPARS} and~\cite{sp} that if $\A$ has the
RIP, then $\Proj^\perp_{\Lambda} \A$ satisfies a modified version of the RIP.

\begin{theorem} \label{lem:PRIP}
Suppose that $\A$ satisfies the RIP of order $K$ with isometry
constant $\delta$, and let $\Lambda \subset \{1, 2, \ldots, N\}$. Define $\Proj^\perp_\Lambda$ as in(\ref{eq:Pperpdef}). If
$|\Lambda| < K$ then
\begin{equation} \label{eq:PRIP}
\left(1 - \frac{\delta}{1-\delta} \right) \|\u\|_2^2 \le \| \Proj^\perp_{\Lambda} \A \u\|_2^2 \le (1+\delta) \|\u\|_2^2
\end{equation}
for all $\u \in \reals^N$ such that $\|\u\|_0 \le K - |\Lambda|$ and
$\supp(\u) \cap \Lambda =  \emptyset$.
\end{theorem}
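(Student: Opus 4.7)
The plan is to exploit the orthogonal decomposition
$\|\A\u\|_2^2 = \|\Proj_\Lambda \A\u\|_2^2 + \|\Proj^\perp_\Lambda \A\u\|_2^2$
and bound each of the two pieces using the RIP of $\A$ together with the inner-product preservation statement of Lemma~\ref{lem:ip}. Throughout, observe that by the support/sparsity hypotheses on $\u$, any vector of the form $\u + \v$ with $\supp(\v)\subseteq \Lambda$ satisfies $\|\u+\v\|_0 \le (K-|\Lambda|)+|\Lambda| = K$, so we may freely apply the RIP of $\A$ to such combined vectors.

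The upper bound is immediate: $\Proj^\perp_\Lambda$ is an orthogonal projector and hence nonexpansive, so $\|\Proj^\perp_\Lambda \A\u\|_2^2 \le \|\A\u\|_2^2 \le (1+\delta)\|\u\|_2^2$ by RIP applied to the $K$-sparse vector $\u$.

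For the lower bound I would rewrite
$\|\Proj^\perp_\Lambda \A\u\|_2^2 = \|\A\u\|_2^2 - \|\Proj_\Lambda \A\u\|_2^2$,
use RIP to get $\|\A\u\|_2^2 \ge (1-\delta)\|\u\|_2^2$, and show that $\|\Proj_\Lambda \A\u\|_2^2 \le \frac{\delta^2}{1-\delta}\|\u\|_2^2$. To obtain the latter, write $\z = \Proj_\Lambda \A\u = \A_\Lambda \mathbf{c}$ for some coefficient vector $\mathbf{c}$, and let $\v \in \reals^N$ denote $\mathbf{c}$ zero-padded to have support in $\Lambda$. Then by self-adjointness of $\Proj_\Lambda$,
\[
\|\z\|_2^2 = \langle \A\v,\Proj_\Lambda \A\u\rangle = \langle \A\v,\A\u\rangle.
\]
Since $\supp(\v)\cap\supp(\u)=\emptyset$, we have $\langle \v,\u\rangle = 0$, so Lemma~\ref{lem:ip} (applicable because $\|\u\pm\v\|_0\le K$) yields $\|\z\|_2^2 \le \delta \|\v\|_2\|\u\|_2$. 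Finally, since $\v$ is $|\Lambda|$-sparse with $|\Lambda|<K$, RIP gives $\|\z\|_2 = \|\A\v\|_2 \ge \sqrt{1-\delta}\,\|\v\|_2$, hence $\|\v\|_2 \le \|\z\|_2/\sqrt{1-\delta}$. Combining these yields $\|\z\|_2 \le \frac{\delta}{\sqrt{1-\delta}}\|\u\|_2$, and the lower bound follows from the identity $1 - \frac{\delta}{1-\delta} = \frac{(1-\delta)^2 - \delta^2}{1-\delta} \cdot \frac{1}{1-\delta} \cdot (\ldots)$; more directly, $(1-\delta) - \delta^2/(1-\delta) = (1-2\delta)/(1-\delta) = 1 - \delta/(1-\delta)$.

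The main obstacle is the bound on $\|\Proj_\Lambda\A\u\|_2$: this is exactly where the near-orthogonality of $\A$-images of disjointly supported sparse vectors (Lemma~\ref{lem:ip}) must be converted into a statement about the projection onto the subspace $\mathcal{R}(\A_\Lambda)$. The key trick is to express the squared projection as an inner product against a specific vector ($\A\v$) in that subspace, which both reveals the disjoint-support structure and lets RIP bound $\|\v\|_2$ in terms of $\|\z\|_2$, closing the loop.
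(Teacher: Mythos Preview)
Your proposal is correct and follows essentially the same approach as the paper: both use the orthogonal decomposition $\|\A\u\|_2^2 = \|\Proj_\Lambda \A\u\|_2^2 + \|\Proj^\perp_\Lambda \A\u\|_2^2$, represent $\Proj_\Lambda \A\u$ as $\A\v$ with $\supp(\v)\subseteq\Lambda$, and invoke Lemma~\ref{lem:ip} on the disjointly supported pair $(\u,\v)$ to bound the projection term. The only cosmetic difference is that the paper bounds $\|\Proj_\Lambda \A\u\|_2$ relative to $\|\A\u\|_2$ (obtaining the factor $\delta/(1-\delta)$) before applying RIP, whereas you bound it relative to $\|\u\|_2$ directly (obtaining $\delta/\sqrt{1-\delta}$); both routes collapse to the same lower bound $1-\delta/(1-\delta)$ after the final simplification, which you carry out correctly in the ``more directly'' clause.
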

\begin{proof}
From the definition of $\Proj^\perp_{\Lambda} \A$ in (\ref{eq:Pdef}), we may decompose $\Proj^\perp_{\Lambda} \A \u$ as
$\Proj^\perp_{\Lambda} \A \u = \A \u - \Proj_\Lambda \A \u$. Since $\Proj_\Lambda$ is an
orthogonal projection,  we can write
\begin{equation} \label{eq:triangle}
\|\A \u\|_2^2 = \|\Proj_\Lambda \A \u\|_2^2 + \|\Proj^\perp_{\Lambda} \A \u\|_2^2.
\end{equation}
Our goal is to show that $\| \A \u \|_2 \approx \|\Proj^\perp_{\Lambda} \A \u\|_2$,
or equivalently, that $\|\Proj_\Lambda \A \u\|_2$ is small. Towards
this end, we note that since $\Proj_\Lambda \A \u$ is orthogonal to
$\Proj^\perp_{\Lambda} \A \u$, we have
\begin{align}
\langle \Proj_\Lambda \A \u, \A \u \rangle & = \langle \Proj_\Lambda \A \u, \Proj_\Lambda \A \u + \Proj^\perp_{\Lambda} \A \u \rangle \notag \\
 & = \langle \Proj_\Lambda \A \u, \Proj_\Lambda \A \u \rangle + \langle \Proj_\Lambda \A \u, \Proj^\perp_{\Lambda} \A \u \rangle \notag \\
 & = \|\Proj_\Lambda \A \u\|_2^2. \label{eq:cosine}
\end{align}
Since $\Proj_\Lambda$ is a projection onto $\Ran(\A_\Lambda)$ there
exists a $\z \in \real^N$ with $\supp(\z) \subseteq \Lambda$ such that $\Proj_\Lambda \A \u = \A
\z$. Furthermore, by assumption, $\supp(\u) \cap \Lambda = \emptyset$.
Hence $\langle \u, \z \rangle = 0$ and from the RIP and Lemma
\ref{lem:ip},
$$
\frac{|\langle \Proj_\Lambda \A \u, \A \u \rangle|}{\|\Proj_\Lambda \A \u\|_2\|\A \u\|_2} = \frac{|\langle \A \z, \A \u \rangle|}{\| \A \z\|_2\|\A \u\|_2}  \le  \frac{|\langle \A \z, \A \u \rangle|}{(1-\delta) \| \z\|_2\|\u\|_2} \le \frac{\delta}{1-\delta}.
$$
Combining this with (\ref{eq:cosine}), we obtain
$$
\| \Proj_\Lambda \A \u\|_2 \le \frac{\delta}{1-\delta} \|\A \u\|_2.
$$
Since we trivially have that $\|\Proj_\Lambda \A \u\|_2 \ge 0$, we can
combine this with (\ref{eq:triangle}) to obtain
$$
\left(1- \left(\frac{\delta}{1-\delta}\right)^2\right)\|\A \u\|_2^2
\le \| \Proj^\perp_{\Lambda} \A \u\|_2^2 \le \|\A \u\|_2^2.
$$
Since $\|\u\|_0 \le K$, we can use the RIP to obtain
$$
\left(1- \left(\frac{\delta}{1-\delta}\right)^2\right) (1-\delta)\|\u\|_2^2 \le \| \Proj^\perp_{\Lambda} \A \u\|_2^2 \le (1+\delta)\|\u\|_2^2,
$$
which simplifies to (\ref{eq:PRIP}).
\end{proof}

\bibliographystyle{IEEEbib}
\footnotesize
\bibliography{samptaRefs}

\begin{thebibliography}{10}

\bibitem{Can::2006::Compressive-sampling}
E.~Cand\`{e}s,
\newblock ``Compressive sampling,''
\newblock in {\em Proc. Int. Congress Math.}, Madrid, Spain, Aug. 2006.

\bibitem{Don::2006::Compressed-sensing}
D.~Donoho,
\newblock ``Compressed sensing,''
\newblock {\em IEEE Trans. Inform. Theory}, vol. 6, no. 4, pp. 1289--1306,
  2006.

\bibitem{BarDavDeV::2008::A-Simple-Proof}
R.~Baraniuk, M.~Davenport, R.~De{V}ore, and M.~Wakin,
\newblock ``A simple proof of the restricted isometry property for random
  matrices,''
\newblock {\em Const. Approx.}, vol. 28, no. 3, pp. 253--263, 2008.

\bibitem{CalDau::2002::The-pros-and-cons}
A.~Calderbank and I.~Daubechies,
\newblock ``The pros and cons of democracy,''
\newblock {\em IEEE Trans. Inform. Theory}, vol. 48, no. 6, 2002.

\bibitem{Gun::2000::Harmonic-analysis}
S.~G{\"u}nt{\"u}rk,
\newblock {\em Harmonic analysis of two problems in signal compression},
\newblock Ph.D. thesis, Program in {A}pplied and {C}omputation {M}athematics,
  Princeton University, Princeton, NJ, Sept. 2000.

\bibitem{Can::2005::Integration-of-sensing}
E.~Cand\`{e}s,
\newblock ``Integration of sensing and processing,''
\newblock {\em IMA Annual Program Year Work.}, Dec. 2005.

\bibitem{RomWak::2007::Compressed-sensing:}
J.~Romberg and M.~Wakin,
\newblock ``Compressed sensing: A tutorial,''
\newblock {\em IEEE Work. Stat. Signal Processing (SSP)}, Aug. 2007.

\bibitem{LasBouDav::2009::Demcracy-in-action:}
J.~Laska, P.~Boufounos, M.~Davenport, and R.~Baraniuk,
\newblock ``Democracy in action: {Q}uantization, saturation, and compressive
  sensing,''
\newblock {\em Preprint}, 2009.

\bibitem{CandesDLP}
E.~Cand\`{e}s and T.~Tao,
\newblock ``Decoding by linear programming,''
\newblock {\em IEEE Trans. Inform. Theory}, vol. 51, no. 12, pp. 4203--4215,
  2005.

\bibitem{DeVoreL1IO}
R.~DeVore, G.~Petrova, and P.~Wojtaszczyk,
\newblock ``Instance-optimality in probability with an $\ell_1$-minimization
  decoder,''
\newblock {\em Appl. Comput. Harmon. Anal.}, vol. 27, no. 3, pp. 275--288,
  2009.

\bibitem{CandesRIP}
E.~Cand\`{e}s,
\newblock ``The restricted isometry property and its implications for
  compressed sensing,''
\newblock {\em Comptes rendus de l'Acad\'{e}mie des Sciences, S\'{e}rie I},
  vol. 346, no. 9-10, pp. 589--592, 2008.

\bibitem{MIP_cai}
T.~Cai, X.~Guangwu, and J.~Zhang,
\newblock ``On recovery of sparse signals via $\ell_1$ minimization,''
\newblock {\em IEEE Trans. Inform. Theory}, vol. 55, no. 7, pp. 3388--3397,
  2009.

\bibitem{DWOMP}
M.~Davenport and M.~Wakin,
\newblock ``Analysis of {O}rthogonal {M}atching {P}ursuit using the restricted
  isometry property,''
\newblock {\em Preprint}, 2009.

\bibitem{romp}
D.~Needell and R.~Vershynin,
\newblock ``Uniform uncertainty principle and signal recovery via regularized
  orthogonal matching pursuit,''
\newblock {\em Found. Comput. Math.}, vol. 9, no. 3, pp. 317--334, 2009.

\bibitem{romp2}
D.~Needell and R.~Vershynin,
\newblock ``Signal recovery from incomplete and inaccurate measurements via
  regularized orthogonal matching pursuit,''
\newblock {\em to appear in IEEE J. Selected Topics in Signal Processing},
  2009.

\bibitem{cosamp}
D.~Needell and J.~Tropp,
\newblock ``Co{S}a{MP}: Iterative signal recovery from incomplete and
  inaccurate samples,''
\newblock {\em Appl. Comput. Harmon. Anal.}, vol. 26, no. 3, pp. 301--321,
  2009.

\bibitem{sp}
W.~Dai and O.~Milenkovic,
\newblock ``Subspace pursuit for compressive sensing signal reconstruction,''
\newblock {\em IEEE Trans. Inform. Theory}, vol. 55, no. 5, pp. 2230--2249,
  2009.

\bibitem{Thresh}
A.~Cohen, W.~Dahmen, and R.~DeVore,
\newblock ``Instance optimal decoding by thresholding in compressed sensing,''
\newblock {\em Preprint}, 2008.

\bibitem{BluDav::2008::Iterative-hard}
T.~Blumensath and M.~Davies,
\newblock ``Iterative hard thresholding for compressive sensing,''
\newblock {\em Appl. Comput. Harmon. Anal.}, vol. 27, no. 3, pp. 265--274,
  2009.

\bibitem{chartrand2008restricted}
R.~Chartrand and V.~Staneva,
\newblock ``{Restricted isometry properties and nonconvex compressive
  sensing},''
\newblock {\em Inverse Problems}, vol. 24, no. 035020, pp. 1--14, 2008.

\bibitem{LasDavBar::2009::Exact-signal}
J.~Laska, M.~Davenport, and R.~Baraniuk,
\newblock ``Exact signal recovery from corrupted measurements through the
  pursuit of justice,''
\newblock in {\em Proc. Asilomar Conf. on Signals Systems and Computers},
  Asilomar, CA, Nov. 2009.

\bibitem{SPARS}
M.~Davenport, P.~Boufounos, and R.~Baraniuk,
\newblock ``Compressive domain interference cancellation,''
\newblock in {\em Structure et parcimonie pour la repr\'{e}sentation adaptative
  de signaux (SPARS)}, Saint-Malo, France, Apr. 2009.

\bibitem{CanRomTao::2006::Stable-signal}
E.~Cand\`{e}s, J.~Romberg, and T.~Tao,
\newblock ``Stable signal recovery from incomplete and inaccurate
  measurements,''
\newblock {\em Comm. Pure and Appl. Math.}, vol. 59, no. 8, pp. 1207--1223,
  2006.

\bibitem{DasGup::1999::An-Elementary-Proof}
S.~Dasgupta and A.~Gupta,
\newblock ``An elementary proof of the {J}ohnson-{L}indenstrauss lemma,''
\newblock {\em {T}ech. {R}eport 99-006, U.C. Berkeley}, Mar. 1999.

\bibitem{JKB-CUDV1}
N.~Johnson, S.~Kotz, and N.~Balakrishnan,
\newblock {\em Continuous Univariate Distributions, Volume 1},
\newblock Wiley, 1994.

\bibitem{SG}
V.~Buldygin and Y.~Kozachenko,
\newblock {\em Metric Characterization of Random Variables and Random
  Processes},
\newblock AMS, 2000.

\bibitem{SubGaussCNX}
M.~Davenport,
\newblock ``Concentration of measure and sub-gaussian distributions,'' 2009,
\newblock Available from \url{http://cnx.org/content/m32583/latest/}.

\bibitem{Lorentz}
G.~Lorentz, M.~Golitschek, and Y.~Makovoz,
\newblock {\em Constructive {A}pproximation: {A}dvanced {P}roblems},
\newblock Springer-Verlag, 1996.

\end{thebibliography}

\end{document}